\documentclass{amsart}

\usepackage{macros}
\standardmargins
\standardpackages
\standardrenews
\standardlabeling

\draftfalse
\newcommand{\bH}{\mathbb H}
\newcommand{\scrHa}{\mathscr H}
\begin{document}
\title[Rigidity of limit sets for geometrically finite Kleinian groups]{Rigidity of limit sets for nonplanar geometrically finite Kleinian groups of the second kind}

\authorlior\authordavid\authormariusz


\begin{abstract}
We consider the relation between geometrically finite groups and their limit sets in infinite-dimensional hyperbolic space. Specifically, we show that a rigidity theorem of Susskind and Swarup ('92) generalizes to infinite dimensions, while a stronger rigidity theorem of Yang and Jiang ('10) does not.
\end{abstract}
\maketitle

\section{Introduction}

Fix $2\leq d\leq \infty$, let $\bH^d$ denote $d$-dimensional hyperbolic space, and let $\Isom(\bH^d)$ denote the isometry group of $\bH^d$. In this paper we consider the following rigidity question: If $G_1,G_2\leq\Isom(\bH^d)$ are discrete groups whose limit sets $\Lambda(G_1),\Lambda(G_2)$ are equal, are $G_1$ and $G_2$ commensurable? In general the answer is no; additional hypotheses are needed. The following result is due to P. Susskind and G. A. Swarup:

\begin{theorem}[{\cite[Theorem 1]{SusskindSwarup}}; cf. {\cite[Theorem 3]{Greenberg2}} for the case $d = 2$]
\label{theoremsusskindswarup}
Fix $2\leq d < \infty$, and let $G_1,G_2\leq\Isom(\bH^d)$ be discrete groups whose limit sets are equal. If $G_1$ is nonelementary and geometrically finite and is a subgroup of $G_2$, then $G_1$ and $G_2$ are commensurable.
\end{theorem}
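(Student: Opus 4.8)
The plan is to turn the commensurability statement into a finiteness-of-index statement and then bound the index by a covolume comparison. Since $G_1\leq G_2$ we have $G_1\cap G_2=G_1$, so $G_1$ and $G_2$ are commensurable if and only if $[G_2:G_1]<\infty$, and it is this finiteness that I will establish. Write $\Lambda=\Lambda(G_1)=\Lambda(G_2)$ and let $H=\mathrm{Hull}(\Lambda)\subseteq\bH^d$ be the convex hull of $\Lambda$. Each $G_i$ preserves its own limit set, and the two limit sets agree, so both $G_1$ and $G_2$ preserve $\Lambda$, hence preserve $H$, and hence preserve the closed $r$-neighborhood $N=\mathcal{N}_r(H)$ for any fixed $r>0$. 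Both groups act properly discontinuously on $N$, being discrete subgroups of $\Isom(\bH^d)$.

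First I would pin down positivity of covolume. Because $G_1$ is nonelementary, $\Lambda$ is infinite, so it is not the endpoint set of a single geodesic and $H$ is at least two-dimensional; consequently $N$ has nonempty interior in $\bH^d$ and carries positive $d$-dimensional volume. Passing from $H$ to the neighborhood $N$ is precisely what lets me sidestep the degenerate possibility that $\Lambda$ spans a proper totally geodesic subspace, in which case $H$ itself would be $d$-dimensionally null. Since $N/G_2$ contains an embedded ball, $\mathrm{vol}(N/G_2)>0$.

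Next I would invoke geometric finiteness of $G_1$ through Bowditch's equivalent formulations: a discrete group is geometrically finite exactly when a uniform neighborhood of the convex core $H/G_1$ has finite volume (equivalently, when $\Lambda$ consists only of conical limit points and bounded parabolic points, equivalently when the group admits a finite-sided fundamental polyhedron). This gives $\mathrm{vol}(N/G_1)<\infty$. Now the multiplicativity of covolume under a finite-index inclusion applies: choosing coset representatives for $G_1$ in $G_2$ and noting that the fixed-point sets of nontrivial torsion elements are lower-dimensional and hence null, the union of the $[G_2:G_1]$ translates of a fundamental domain for $G_2$ by these representatives is a fundamental domain for $G_1$, so
\[
\mathrm{vol}(N/G_1)=[G_2:G_1]\cdot\mathrm{vol}(N/G_2).
\]
Combining $\mathrm{vol}(N/G_1)<\infty$ with $\mathrm{vol}(N/G_2)>0$ forces $[G_2:G_1]<\infty$, which is the desired conclusion.

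I expect the main obstacle to be the finiteness step: justifying that geometric finiteness of $G_1$ yields $\mathrm{vol}(N/G_1)<\infty$ for the neighborhood $N$ of the convex core, including the case in which $\Lambda$ lies in a proper totally geodesic subspace so that the core is itself lower-dimensional. I would resolve this by appealing to Bowditch's theorem that the various definitions of geometric finiteness coincide, one of which is exactly finiteness of the volume of a uniform neighborhood of the convex core. A secondary point is the bookkeeping of torsion in the covolume identity, handled as above since fixed-point sets are null. Finally, the nonelementary hypothesis is genuinely needed, and the argument records why: if $G_1$ is a single parabolic cyclic group and $G_2$ a higher-rank parabolic group fixing the same point, then $\Lambda(G_1)=\Lambda(G_2)$ is a single point while $[G_2:G_1]=\infty$, and correspondingly $H=\emptyset$, so the covolume comparison correctly fails to apply.
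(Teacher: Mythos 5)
Your proof is correct for $2\le d<\infty$, but it takes a genuinely different route from the one in the paper. Your covolume comparison --- geometric finiteness of $G_1$ gives $\mathrm{vol}(N/G_1)<\infty$ via Bowditch's equivalences, discreteness of $G_2$ gives $\mathrm{vol}(N/G_2)>0$, and multiplicativity of covolume along the inclusion $G_1\leq G_2$ forces $[G_2:G_1]<\infty$ --- is in fact close in spirit to the original Susskind--Swarup argument, and it is valid here. The paper instead obtains the statement as the finite-dimensional case of Theorem \ref{theoremsusskindswarupgeneralization}, by an entirely measure-free argument: choose a transversal $T$ of $G_2/G_1$ lying in a Dirichlet domain for $G_1$, use compact type and strong discreteness to extract $g_n\in T$ with $g_n(\zero)\to\xi\in\Lambda$, and rule out both alternatives of the geometric-finiteness dichotomy for $\xi$: a radial (hence horospherical) limit point is excluded by a Busemann function estimate played against the Dirichlet minimality property, and a bounded parabolic point is excluded by showing via a Gromov product computation that $g_n(\zero)\to\xi$ radially, although a parabolic fixed point of $G_2$ cannot be a radial limit point. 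What each approach buys: yours is shorter and leans on standard finite-dimensional machinery, but it is intrinsically finite-dimensional --- there is no volume on $\bH^\infty$, so the finite-covolume characterization of geometric finiteness is meaningless there --- whereas the paper's argument uses only coarse hyperbolic geometry and hence generalizes verbatim to $d=\infty$, which is the point of the paper. Two small inaccuracies in your appeal to Bowditch, neither of which affects your proof: the finite-volume criterion (GF4) is ``finite volume of a uniform neighborhood of the core \emph{together with} a bound on the orders of finite subgroups,'' and you only use the implication from geometric finiteness to finite volume, which is the correct direction; and the finite-sided fundamental polyhedron characterization is \emph{not} equivalent to geometric finiteness in dimensions $d\ge 4$ (Apanasov's screw parabolic examples), so that parenthetical should be dropped.
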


The requirement here that $G_1\leq G_2$ is quite a strong hypothesis, and the theorem is certainly false without it. To see this, note that if $G_1,G_2\leq\Isom(\bH^d)$ are lattices, then $\Lambda(G_1) = \del\bH^d = \Lambda(G_2)$, but it is quite possible that $G_1\cap G_2 = \{\id\}$. However, the hypothesis can be replaced by some additional assumptions. Specifically, the following was proven by W.-Y. Yang and Y.-P. Jiang:

\begin{theorem}[{\cite[Corollary 1.2]{YangJiang}}]
\label{theoremequality}
Fix $2\leq d < \infty$, and let $G_1,G_2\leq\Isom(\bH^d)$ be two geometrically finite nonplanar groups of the second kind whose limit sets are equal. Then $G_1$ and $G_2$ are commensurable; in fact,
\[
[\lb G_1,G_2\rb:G_1\cap G_2] < \infty.
\]
\end{theorem}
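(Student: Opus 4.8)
The plan is to reduce everything to proving that $\Gamma := \lb G_1,G_2\rb$ is discrete; granting this, the conclusion follows quickly. Write $\Lambda := \Lambda(G_1) = \Lambda(G_2)$. Since $G_1$ and $G_2$ each preserve their limit set, $\Gamma$ preserves the nonempty closed set $\Lambda$ setwise. Assuming $\Gamma$ is discrete, it is nonelementary (it contains $G_1$), so $\Lambda(\Gamma)$ is the unique minimal nonempty closed $\Gamma$-invariant subset of $\del\bH^d$; thus $\Lambda(\Gamma)\subseteq\Lambda$, while $G_1\leq\Gamma$ gives $\Lambda=\Lambda(G_1)\subseteq\Lambda(\Gamma)$, whence $\Lambda(\Gamma)=\Lambda=\Lambda(G_1)=\Lambda(G_2)$. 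I would then apply Theorem~\ref{theoremsusskindswarup} to the inclusions $G_1\leq\Gamma$ and $G_2\leq\Gamma$—each $G_i$ being nonelementary, geometrically finite, and satisfying $\Lambda(G_i)=\Lambda(\Gamma)$—to conclude $[\Gamma:G_1]<\infty$ and $[\Gamma:G_2]<\infty$. Since the injection $G_1/(G_1\cap G_2)\hookrightarrow\Gamma/G_2$ yields $[G_1:G_1\cap G_2]\leq[\Gamma:G_2]<\infty$, we obtain $[\Gamma:G_1\cap G_2]=[\Gamma:G_1]\,[G_1:G_1\cap G_2]<\infty$, which is exactly the claimed bound.

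The crux is therefore the discreteness of $\Gamma$, and this is where I would use the nonplanarity and second-kind hypotheses (geometric finiteness plays no role in this step). First I would record that a nonelementary, nonplanar subgroup of $\mathbf G:=\Isom(\bH^d)$ is Zariski dense: a non-Zariski-dense subgroup lies in a proper algebraic subgroup, forcing it to fix a point of $\overline{\bH^d}$ or a pair of boundary points (elementary) or to preserve a proper totally geodesic subspace (planar), both excluded. Hence $\Gamma\geq G_1$ is Zariski dense. Now suppose for contradiction that $\Gamma$ is not discrete, and let $L:=\overline\Gamma^{\,0}$ be the identity component of its closure in the Lie topology, a nontrivial connected subgroup. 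Since $\Gamma$ normalizes $L$, it fixes $\mathfrak l:=\mathrm{Lie}(L)$ under the adjoint action; the stabilizer of $\mathfrak l$ in $\mathbf G$ is Zariski closed and contains the Zariski-dense group $\Gamma$, so $\mathfrak l$ is an ideal of $\mathfrak g=\mathfrak{so}(d,1)$. As $\mathfrak g$ is simple and $\mathfrak l\neq 0$, we get $\mathfrak l=\mathfrak g$, i.e.\ $\overline\Gamma\supseteq\mathbf G^0$. But $\mathbf G^0$ acts transitively on $\del\bH^d$, so the closed $\overline\Gamma$-invariant set $\Lambda$ must be empty or all of $\del\bH^d$; since $\Lambda\neq\emptyset$ this forces $\Lambda=\del\bH^d$, contradicting the assumption that $G_1$ is of the second kind. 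Hence $\Gamma$ is discrete.

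The main obstacle is thus not a deep geometric estimate but the careful justification of the two structural inputs to the discreteness step: the dichotomy that a nonelementary, nonplanar subgroup is Zariski dense (which requires identifying the maximal proper algebraic subgroups of $\mathrm{O}(d,1)$ and matching them to the \emph{elementary} and \emph{planar} alternatives), and the fact that the Lie-topology closure of a Zariski-dense subgroup of the simple group $\mathbf G$ is either discrete or contains $\mathbf G^0$. I would also verify the small-dimensional cases (confirming $\mathfrak{so}(d,1)$ is simple for $d=2,3$) and handle the bookkeeping with the non-identity component of $\Isom(\bH^d)$, but these are routine. It is worth noting that this discreteness argument is intrinsically finite-dimensional—it rests on the simplicity of $\mathfrak{so}(d,1)$ and the algebraic structure of $\Isom(\bH^d)$—which is consistent with the paper's thesis that Theorem~\ref{theoremequality} fails for $d=\infty$, whereas Theorem~\ref{theoremsusskindswarup} persists.
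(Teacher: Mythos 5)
Your proposal is correct, and its skeleton coincides with the paper's: show $\Lambda(\langle G_1,G_2\rangle)=\Lambda$ via minimality of limit sets, establish discreteness of $\Gamma=\langle G_1,G_2\rangle$, apply Theorem~\ref{theoremsusskindswarup} twice (to $G_1\leq\Gamma$ and $G_2\leq\Gamma$), and finish with the same index bookkeeping $[\Gamma:G_1\cap G_2]\leq[\Gamma:G_1]\,[\Gamma:G_2]<\infty$. The one genuine divergence is the discreteness step: the paper simply observes that $\Lambda(\Gamma)=\Lambda\neq\partial\bH^d$ forces $\Gamma$ to be nondense, and then quotes Greenberg's theorem (Theorem~\ref{theoremgreenberg}: nonplanar and nondense implies discrete), whereas you reprove the needed case of that theorem from scratch --- Zariski density of a nonelementary nonplanar group, plus the fact that the Lie algebra of the identity component of $\overline\Gamma$ is $\mathrm{Ad}(\Gamma)$-invariant, hence an ideal of the simple algebra $\mathfrak{so}(d,1)$, so a nondiscrete $\Gamma$ would have closure containing $\Isom(\bH^d)^0$ and thus $\Lambda=\partial\bH^d$, contradicting the second-kind hypothesis. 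Your argument is sound (the Zariski-density dichotomy you flag --- that a non-Zariski-dense subgroup fixes a point of $\overline{\bH^d}$ or preserves a proper totally geodesic subspace --- is the standard classification of proper algebraic subgroups of $\mathrm{O}(d,1)$ and does need the care you indicate), and it buys self-containedness at the cost of length; the citation route is shorter but leaves the crucial input as a black box. Notably, your version makes explicit exactly where finite-dimensionality enters (simplicity of $\mathfrak{so}(d,1)$ and Cartan's closed-subgroup theorem), which dovetails with the paper's remark in Section~\ref{sectioninfdim} that its infinite-dimensional counterexample to Theorem~\ref{theoremequality} is simultaneously a counterexample to the infinite-dimensional version of Theorem~\ref{theoremgreenberg}, since Greenberg's theorem is the only finite-dimensional ingredient in the proof.
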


Here a discrete group $G\leq\Isom(\H^d)$ is said to be \emph{nonplanar} if its limit set is not contained in the closure of any proper totally geodesic subspace of $\bH^d$. We include a proof of Theorem \ref{theoremequality} in Section \ref{sectionequality}, as well as showing that all of its hypotheses are necessary.



In this paper, we show that Theorem \ref{theoremsusskindswarup} can be generalized to infinite dimensions (with ``discrete'' becoming ``strongly discrete'', see below), but Theorem \ref{theoremequality} fails in infinite dimensions. See Theorems \ref{theoremsusskindswarupgeneralization} and \ref{theoremnonrigidity}, respectively.


In Section \ref{sectiondefinitions}, we define the terms used in our theorems and recall some results regarding infinite-dimensional hyperbolic space. In Section \ref{sectionequality}, we prove Theorem \ref{theoremequality}, and in Section \ref{sectioninfdim} we prove our main theorems regarding the infinite dimensional analogues of Theorems \ref{theoremsusskindswarup} and \ref{theoremequality}.

{\bf Acknowledgements.} The first-named author was supported in part by the Simons Foundation grant \#245708. The third-named author was supported in part by the NSF grant DMS-1361677.

\section{Definitions of terms}
\label{sectiondefinitions}
Fix $2\leq d\leq \infty$, and let $\bH^d$ denote $d$-dimensional real hyperbolic space; see \cite[\62]{DSU} for background regarding the case $d = \infty$.
We will use \cite{DSU} as our standard reference regarding Kleinian groups, for the reason that it explicitly considers the infinite-dimensional case.
A group $G\leq\Isom(\bH^d)$ is called \emph{(strongly) discrete} if
\[
\#\{g\in G: \dist(\0,g(\0))\leq R\} < \infty \all R > 0.
\]
The adverb ``strongly'' is used in infinite dimensions since in that case there are other, weaker, notions of discreteness; cf. \cite[\65]{DSU}. The group $G$ is called \emph{nonplanar} if it preserves neither any proper closed totally geodesic subspace of $\bH^d$ nor any point on $\del \bH^d$. This property was called \emph{acting irreducibly} in \cite[\67.6]{DSU}.

The \emph{limit set} of $G$ is the set
\[
\Lambda(G) := \{\xi\in\del\bH^d: \exists (g_n)_1^\infty\text{ in $G$} \;\; g_n(\0)\tendsto n \xi\}.
\]
$G$ is called \emph{nonelementary} if its limit set contains at least three points, in which case its limit set must contain uncountably many points \cite[Proposition 10.5.4]{DSU}.
Recall that a set $A\subset\bH^d$ is said to be \emph{convex} if the geodesic segment connecting any two points of $A$ is contained in $A$, and that when $G$ is nonelementary, the \emph{convex hull} of the limit set is the smallest convex subset of $\bH^d$ whose closure contains $\Lambda$. We denote the convex hull of the limit set by $\CC(G)$.

A strongly discrete group $G\leq\Isom(\bH^d)$ is called \emph{geometrically finite} if there exists a disjoint $G$-invariant collection of horoballs $\scrHa$ and a radius $\sigma > 0$ such that
\[
\CC(G) \subset G(B(\0,\sigma))\cup \bigcup_{H\in\scrHa} H.
\]
This definition appears in the form presented here in \cite[Definition 12.4.1]{DSU}, and in a similar form in \cite[Definition (GF1)]{Bowditch_geometrical_finiteness}.
$G$ is called \emph{convex-cobounded} if the collection $\scrHa$ is empty, i.e. if
\[
\CC(G) \subset G(B(\0,\sigma)).
\]
Finally, $G$ is of \emph{compact type} if its limit set is compact. It was shown in \cite{DSU} that every geometrically finite group is of compact type.

If a sequence $(x_n)_1^\infty$ in $\bH^d$ converges to a point $\xi\in\del\bH^d$, then as usual we call the convergence \emph{radial} if there is a cone with vertex $\xi$ which contains the sequence $(x_n)_1^\infty$. By \cite[Proposition 7.1.1]{DSU}, the convergence is radial if and only if the numerical sequence $(\lb \zero | \xi \rb_{x_n})_1^\infty$ is bounded. Here $\lb \cdot | \cdot \rb$ denotes the Gromov product:
\[
\lb y|\xi\rb_z = \lim_{x\to\xi} \frac12[\dist(z,y) + \dist(z,x) - \dist(y,x)].
\]
Given $\xi\in\Lambda(G)$, we denote by $\busemann_\xi$ the \emph{Busemann function} based at $\xi$, i.e. 
\[
\busemann_\xi(y,z) = \lim_{x\to\xi} [\dist(x,y) - \dist(x,z)].
\]
In the sequel we will find the following results useful:

\begin{proposition}[Minimality of limit sets, {\cite[Proposition 7.4.1]{DSU}}]
\label{propositionminimality}
Fix $G\leq\Isom(\bH^d)$. Any closed $G$-invariant subset of $\del X$ which contains at least two points contains $\Lambda(G)$.
\end{proposition}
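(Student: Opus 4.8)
The plan is to show that $\Lambda(G)$ is the \emph{smallest} closed $G$-invariant set with at least two points, by proving that an arbitrary $\zeta\in\Lambda(G)$ lies in any such set $C$. Fix distinct points $\xi,\eta\in C$ (possible since $C$ has at least two points) and let $\gamma$ be the bi-infinite geodesic with endpoints $\xi,\eta$. The idea is that the $G$-translates $g(\gamma)$ have both endpoints in $C$ by invariance, and a suitable sequence of these translates ``points toward'' $\zeta$; producing from them a sequence of \emph{endpoints} lying in $C$ and converging to $\zeta$ will force $\zeta\in C$, because $C$ is closed.

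More precisely, choose $(g_n)$ in $G$ with $g_n(\0)\to\zeta$, and let $p$ be the nearest point of $\gamma$ to $\0$. Since $\dist\big(g_n(\0),g_n(p)\big)=\dist(\0,p)$ is constant in $n$, and the Gromov product is coarsely $1$-Lipschitz in each entry, we get $\lb g_n(p)\mid\zeta\rb_\0\geq \lb g_n(\0)\mid\zeta\rb_\0-\dist(\0,p)-O(\delta)\to\infty$, so $g_n(p)\to\zeta$ as well. Now $w_n:=g_n(p)$ lies on the geodesic joining $a_n:=g_n(\xi)$ and $b_n:=g_n(\eta)$, and $a_n,b_n\in C$.

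The crux is the following coarse inequality, valid in any $\delta$-hyperbolic space (so in particular in $\bH^d$ for every $d\leq\infty$, which is $\delta$-hyperbolic with a universal constant): if $w$ lies on the geodesic joining boundary points $a,b$, then
\[
\lb w\mid\zeta\rb_\0 \;\leq\; \max\!\big(\lb a\mid\zeta\rb_\0,\ \lb b\mid\zeta\rb_\0\big) + O(\delta).
\]
One verifies this first in a tree — where Gromov products are exact distances to geodesics and the three boundary points $a,b,\zeta$ have a common median $c$ through which $[a,b]$, $[a,\zeta]$, and $[b,\zeta]$ all pass — by checking that $[w,\zeta]$ is contained in $[a,\zeta]$ or in $[b,\zeta]$ according to which side of $c$ the point $w$ lies; the general case then follows from the standard tree approximation of finite configurations up to an additive $\delta$-error. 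Applying this to $w_n,a_n,b_n$ and using $\lb w_n\mid\zeta\rb_\0\to\infty$ shows $\max\big(\lb a_n\mid\zeta\rb_\0,\lb b_n\mid\zeta\rb_\0\big)\to\infty$. Choosing $\zeta_n\in\{a_n,b_n\}\subset C$ to realize the maximum gives $\lb\zeta_n\mid\zeta\rb_\0\to\infty$, i.e. $\zeta_n\to\zeta$; since $C$ is closed, $\zeta\in C$, as desired.

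The main obstacle is that the natural finite-dimensional argument — pass to a subsequence along which $g_n(\xi)$ and $g_n(\eta)$ converge in the compact space $\bH^d\cup\del\bH^d$ and analyze the limiting geodesic — is unavailable when $d=\infty$, since neither $\del\bH^d$ nor the space of geodesics is compact. The quantitative Gromov-product inequality above is precisely what replaces compactness: it yields a convergent sequence of endpoints directly, with no extraction, and its proof uses only the uniform $\delta$-hyperbolicity of $\bH^d$, which holds in all dimensions. Establishing this inequality, and correctly bookkeeping the additive constants, is the only real work; everything else is formal.
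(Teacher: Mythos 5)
The paper itself contains no proof of this proposition --- it is quoted directly from \cite[Proposition 7.4.1]{DSU} --- so there is no in-paper argument to compare against; your strategy (translate two points of $C$ by a sequence $g_n$ with $g_n(\0)\to\zeta$, and use the uniform $\delta$-hyperbolicity of $\bH^d$ for all $d\leq\infty$ in place of the compactness available when $d<\infty$) is the standard one and is the right way to handle $d=\infty$. The overall architecture is sound: $w_n=g_n(p)\to\zeta$ because the Gromov product is in fact exactly $1$-Lipschitz in each entry (no $\delta$-error needed there); the reduction of the proposition to your displayed inequality is correct; and $\lb \zeta_n|\zeta\rb_\0\to\infty$ does characterize $\zeta_n\to\zeta$ in $\del\bH^d$, with closedness of $C$ finishing the proof.

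However, your justification of the crux inequality --- which you correctly identify as the only real work --- is backwards as written. In a tree one has $\lb x|\zeta\rb_\0=\dist(\0,\geo x\zeta)$, so the containment $\geo w\zeta\subseteq\geo a\zeta$ (which does hold when $w$ lies on the $a$-side of the median $c$) yields $\lb w|\zeta\rb_\0\geq\lb a|\zeta\rb_\0$, a \emph{lower} bound, since the distance to a subset is at least the distance to the ambient set. The inequality you want is nevertheless true and easily repaired. Either (a) do the honest case analysis on the tripod with center $c$, splitting according to which of the three rays the nearest-point projection of $\0$ lands on; this gives $\lb w|\zeta\rb_\0\leq\max\left(\lb a|\zeta\rb_\0,\lb b|\zeta\rb_\0\right)$ exactly in trees. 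Or (b) bypass tree approximation entirely (which in any case needs a remark, since your configuration includes boundary points): two applications of the $\delta$-inequality show that if $\lb w|\zeta\rb_\0>\lb a|\zeta\rb_\0+\delta$ then $\lb a|w\rb_\0\leq\lb a|\zeta\rb_\0+\delta$, and similarly for $b$; combining these with the exact identity $\lb a|w\rb_\0+\lb b|w\rb_\0=\dist(\0,w)+\lb a|b\rb_\0-\lb a|b\rb_w$, the bound $\lb a|b\rb_w=O(\delta)$ for $w\in\geo ab$, and the trivial estimate $\lb w|\zeta\rb_\0\leq\dist(\0,w)$ gives $\lb w|\zeta\rb_\0\leq\lb a|\zeta\rb_\0+\lb b|\zeta\rb_\0+O(\delta)$. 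This has $2\max$ in place of $\max$, but that is just as good for concluding $\max\left(\lb a_n|\zeta\rb_\0,\lb b_n|\zeta\rb_\0\right)\to\infty$. With this one step corrected, your proof is complete.
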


\begin{proposition}[{\cite[Proposition 7.6.3]{DSU}}]
Let $G$ be a nonelementary subgroup of $\Isom(\bH^d)$. Then the following are equivalent:
\begin{itemize}
\item[(A)] $G$ is nonplanar.
\item[(B)] There does not exist a nonempty closed totally geodesic subspace $V\propersubset\bH$ whose closure contains $\Lambda(G)$.
\end{itemize}
\end{proposition}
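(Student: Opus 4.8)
The plan is to prove the two implications in contrapositive form, i.e. to establish that $\neg(\mathrm A)$ and $\neg(\mathrm B)$ are equivalent, after one preliminary reduction that I expect to be the crux. The reduction is this: because $G$ is nonelementary and (strongly) discrete, it cannot fix any point $\xi\in\del\bH^d$. Granting this, the clause ``preserves no point on $\del\bH^d$'' in the definition of nonplanarity is automatic, so $\neg(\mathrm A)$ says exactly that $G$ preserves some proper closed totally geodesic subspace. The proposition then reduces to the equivalence of ``$G$ preserves a proper closed totally geodesic subspace'' with ``some proper closed totally geodesic subspace has closure containing $\Lambda(G)$.'' I would prove the two directions of this reduced statement first, and then return to justify the reduction.

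For $(\mathrm A)\Rightarrow(\mathrm B)$ I argue contrapositively: assume some proper closed totally geodesic $V\propersubset\bH^d$ has $\Lambda(G)\subset\overline V$, and produce a $G$-invariant proper subspace. The idea is to pass to a canonical, hence invariant, minimal object: let $V_0$ be the intersection of all closed totally geodesic subspaces containing the convex hull $\CC(G)$. The intersection of any family of closed totally geodesic subspaces is again one (valid in all dimensions, since these correspond to closed affine subspaces of the ambient Hilbert space), so $V_0$ is the smallest closed totally geodesic subspace containing $\CC(G)$. Since $\CC(G)$ is $G$-invariant and each $g\in G$ sends totally geodesic subspaces to totally geodesic subspaces, minimality forces $g(V_0)=V_0$; thus $G$ preserves $V_0$. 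Finally $V_0$ is proper, because $\overline V$ is closed and convex and contains $\Lambda(G)$, so minimality of the convex hull gives $\CC(G)\subset V$ and hence $V_0\subset V\propersubset\bH^d$. This is $\neg(\mathrm A)$.

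For $(\mathrm B)\Rightarrow(\mathrm A)$, again contrapositively, suppose $G$ preserves a proper closed totally geodesic subspace $V$. Since $G$ is nonelementary it fixes no point of $\bH^d$, so $V$ is not a single point; hence $\del V:=\overline V\cap\del\bH^d$ contains at least two points. As $\del V$ is closed and $G$-invariant, Proposition \ref{propositionminimality} yields $\Lambda(G)\subset\del V\subset\overline V$, which is exactly $\neg(\mathrm B)$.

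The main obstacle is the preliminary reduction, which is where discreteness and nonelementarity do the real work. To rule out a fixed point $\xi\in\del\bH^d$, I would recall that a nonelementary discrete group contains loxodromic isometries whose fixed points are dense in $\Lambda(G)$. Any loxodromic $g\in G$ fixes $\xi$ (as all of $G$ does), so $\xi$ is one of its two fixed points; the same holds for every other loxodromic $h\in G$, so $g$ and $h$ share the fixed point $\xi$. Invoking the classical fact that two loxodromics in a discrete group cannot have exactly one common fixed point---otherwise the distinct conjugates $h^n g h^{-n}$ would converge to an isometry, violating discreteness---I conclude that all loxodromics of $G$ share the \emph{same} pair of fixed points $\{\xi,\eta\}$. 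By density, $\Lambda(G)=\{\xi,\eta\}$ has only two points, contradicting nonelementarity. I would also remark that discreteness is genuinely necessary here: the full stabilizer of a point $\xi\in\del\bH^d$ in $\Isom(\bH^d)$ is nonelementary with $\Lambda=\del\bH^d$, so it satisfies (B) but not (A).
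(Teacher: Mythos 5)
The first thing to note is that the paper contains no proof of this proposition: it is quoted from \cite{DSU} (Proposition 7.6.3), so your argument can only be measured against the statement itself. The core of your proof is correct and is the natural argument. For $\neg(\mathrm B)\Rightarrow\neg(\mathrm A)$, the smallest closed totally geodesic subspace containing $\CC(G)$ exists (such subspaces are stable under arbitrary intersections, in $\bH^\infty$ as well as in finite dimensions) and is $G$-invariant by canonicity and proper because $\CC(G)\subset V$; for $\neg(\mathrm A)\Rightarrow\neg(\mathrm B)$ in the invariant-subspace case, nonelementarity rules out $V$ being a single point and Proposition \ref{propositionminimality} applied to $\overline V\cap\del\bH^d$ does the rest. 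Both halves are sound.

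The trouble is your preliminary reduction, which proves something other than what is stated. The proposition assumes only that $G$ is nonelementary --- there is no discreteness hypothesis --- whereas your reduction (no fixed point on $\del\bH^d$) uses strong discreteness essentially, and your own closing remark shows this is unavoidable: the full stabilizer of $\xi\in\del\bH^d$ is nonelementary in this paper's sense (its limit set is all of $\del\bH^d$), satisfies (B), and fails (A). So, read literally with the definitions given in Section \ref{sectiondefinitions}, the implication $(\mathrm B)\Rightarrow(\mathrm A)$ fails for nondiscrete focal groups, and what you have actually proved is the proposition restricted to strongly discrete $G$. That restriction happens to cover every use the paper makes of the result, but it means the quoted statement is tacitly relying on the conventions of \cite{DSU}, under which such focal groups do not arise; a correct write-up must either flag this or carry the discreteness hypothesis explicitly rather than importing it silently. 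Separately, within the reduction, the appeal to ``the distinct conjugates $h^n g h^{-n}$ converge to an isometry, violating discreteness'' is a finite-dimensional argument: in $\bH^\infty$ closed balls are not compact and the rotation parts of the conjugates need not converge at all. To contradict \emph{strong} discreteness you must instead exhibit infinitely many distinct elements with uniformly bounded displacement of a basepoint; this works if you normalize $\xi=\infty$ in the upper half-space model, where every element fixing $\infty$ is a similarity and the translation parts of suitable conjugates tend to $0$. Once that lemma is secured, you can also dispense with density of loxodromic fixed points (itself a nontrivial input for focal groups): conjugating a single loxodromic $g$ with fixed pair $\{\xi,\eta\}$ by an arbitrary $f\in G$ yields a loxodromic sharing the fixed point $\xi$, whence $f(\eta)=\eta$; then $G$ preserves the geodesic joining $\xi$ and $\eta$, so $\Lambda(G)\subset\{\xi,\eta\}$, contradicting nonelementarity directly.
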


\section{Proof of Theorem \ref{theoremequality}}
\label{sectionequality}
In this section we prove Theorem \ref{theoremequality}, and then show that none of its hypotheses can be dropped. To do so we will need the following theorem:

\begin{theorem}[{\cite[Theorem 2]{Greenberg}}]
\label{theoremgreenberg}
Fix $2\leq d < \infty$, and suppose that $G\leq\Isom(\bH^d)$ is nonplanar and is not dense in $\Isom(\bH^d)$. Then $G$ is discrete.
\end{theorem}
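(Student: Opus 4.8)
The plan is to pass to the closure $\overline G$ of $G$ in $\Isom(\bH^d)$ and exploit the Lie group structure available in finite dimensions. Since $\Isom(\bH^d)$ is a Lie group when $d<\infty$, Cartan's closed subgroup theorem makes $\overline G$ a Lie subgroup. Because $G\subseteq\overline G$, any proper totally geodesic subspace or boundary point preserved by $\overline G$ would also be preserved by $G$; hence $\overline G$ is again nonplanar. As $G\subseteq\overline G$, it suffices to prove that $\overline G$ is discrete, so I would suppose for contradiction that it is not. Then its identity component $H:=\overline G^{0}$ is a nontrivial connected Lie subgroup, and, crucially, it is normal in $\overline G$.

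The engine of the argument is a canonicity principle: any geometric object attached to $H$ in a conjugation-equivariant way is preserved by all of $\overline G$, since $gHg^{-1}=H$ for every $g\in\overline G$. First I would inspect the fixed-point set $\mathrm{Fix}_{\bH^d}(H)$. If it is nonempty, then, being an intersection of fixed-point sets of isometries, it is a totally geodesic subspace, proper because $H\neq\{\id\}$, and $\overline G$-invariant by canonicity, contradicting nonplanarity. So I may assume $H$ fixes no point of $\bH^d$; a short argument then shows its set $\mathrm{Fix}_{\del}(H)$ of fixed boundary points has at most two elements, since any three fixed boundary points would span a copy of $\bH^2$ fixed pointwise, returning us to the previous case. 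If $\mathrm{Fix}_{\del}(H)$ is a single point or a pair, then $\overline G$ fixes that point or preserves the geodesic joining the pair, again contradicting nonplanarity. This reduces matters to the case in which $H$ fixes no point of $\bH^d\cup\del\bH^d$.

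In that remaining case $\Lambda(H)$ has at least two points, since an empty or single-point limit set would produce a fixed point; thus the smallest totally geodesic subspace $V$ whose closure contains $\Lambda(H)$ is well-defined and, being canonically associated to $H$, is $\overline G$-invariant. If $V\neq\bH^d$ this contradicts nonplanarity, so $V=\bH^d$, which forces $H$ to preserve no proper totally geodesic subspace and, with the previous step, shows $H$ is itself a connected nonplanar group. The crux is then the structural fact that the only connected nonplanar subgroup of $\Isom(\bH^d)$ is the full identity component $\Isom^+(\bH^d)$. I would derive this from the classification of maximal proper connected subgroups of $SO^+(d,1)\cong\Isom^+(\bH^d)$: up to conjugacy these are the maximal compact $SO(d)$ (fixing a point of $\bH^d$), the parabolic $MAN$ (fixing a boundary point), and the reductive subgroups of type $SO^+(k,1)\times SO(d-k)$ (preserving a totally geodesic $\bH^k$). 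Every proper connected subgroup lies in one of these, hence inherits an invariant proper subspace or a fixed point and cannot be nonplanar; so a nonplanar $H$ must equal $\Isom^+(\bH^d)$. Then $\overline G\supseteq\Isom^+(\bH^d)$ is open, whence $G$ is dense in the identity component, contrary to hypothesis.

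The main obstacle is precisely this last structural input. Everything else, namely Cartan's theorem, the fixed-point analysis, and the canonicity principle, is routine, but deducing that a connected subgroup with no invariant proper totally geodesic subspace and no fixed boundary point must be everything genuinely requires the subgroup structure of the rank-one group $SO^+(d,1)$, equivalently the simplicity of $\mathfrak{so}(d,1)$ together with the list of its maximal subalgebras. I would either cite this classification directly or, for a self-contained treatment, reprove it via the restricted root decomposition $\mathfrak{so}(d,1)=\mathfrak g_{-1}\oplus(\mathfrak a\oplus\mathfrak m)\oplus\mathfrak g_{1}$, checking that every proper subalgebra is contained either in the parabolic $\mathfrak a\oplus\mathfrak m\oplus\mathfrak g_1$ or in the stabilizer of a proper nondegenerate subspace.
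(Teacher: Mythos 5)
The paper does not prove this statement at all --- it is imported verbatim as Theorem 2 of Greenberg's paper --- so there is no internal argument to compare against; what you have written is essentially a reconstruction of the classical proof, and its outline is sound. Passing to the closure $\overline G$ via Cartan's closed subgroup theorem, and playing the normality of the identity component $H=\overline G^0$ against nonplanarity through canonically attached invariant objects (the interior fixed-point set, the boundary fixed-point set, the minimal closed totally geodesic subspace whose closure contains $\Lambda(H)$) is correct. Your reduction from three fixed boundary points works because an isometry fixing three distinct boundary points must fix the geodesic joining two of them pointwise (a nontrivial translation along that geodesic would be loxodromic, with exactly two fixed boundary points); and $\Lambda(H)\neq\emptyset$ in the remaining case uses that the action is proper (compact point stabilizers), which is available since $d<\infty$. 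You correctly identify that the whole weight rests on the structural fact that a connected subgroup fixing no point of $\bH^d\cup\del\bH^d$ and preserving no proper closed totally geodesic subspace must be all of $\Isom^+(\bH^d)$. Citing the subgroup structure of $\mathrm{SO}^+(d,1)$ there is legitimate; an alternative packaging of the same input is Karpelevich's theorem (a connected semisimple subgroup of the isometry group of a Hadamard symmetric space has a totally geodesic orbit), combined with the facts that a nontrivial radical, being connected solvable and hence amenable, forces a fixed point in $\overline{\bH^d}$ or an invariant geodesic, and that every proper connected transitive subgroup of $\Isom^+(\bH^d)$ (e.g. the Iwasawa group $AN$) fixes a boundary point. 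The one place your sketch should not be waved through is the assertion that the three families you list exhaust the maximal connected subgroups: ruling out proper connected subgroups acting irreducibly on $\R^{d,1}$ is exactly the nontrivial point, so either cite it or carry out the root-space verification you propose.

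One genuine wrinkle: your closing contradiction is not literal. If $\overline G=\Isom^+(\bH^d)$, then $G$ is \emph{not} dense in $\Isom(\bH^d)$, since the full isometry group has two components; indeed $G=\Isom^+(\bH^d)$ itself satisfies every hypothesis of the statement as written and is not discrete, so the theorem is literally false as phrased and no proof can close the gap you glossed. The intended (and standard) reading is that $\overline G$ does not contain $\Isom^+(\bH^d)$, and this is also the form in which the paper uses the result in proving Theorem \ref{theoremequality}: there non-density is deduced from $\Lambda(G_+)\neq\del\bH^d$, which does rule out $\overline{G_+}\supseteq\Isom^+(\bH^d)$, because the latter would force $\Lambda(G_+)=\del\bH^d$. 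So you should end your argument by contradicting that sharper hypothesis (closure does not contain the identity component) rather than non-density in the full group; with that adjustment your proof is complete modulo the cited classification.
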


\begin{proof}[Proof of Theorem \ref{theoremequality}]
Fix $2\leq d < \infty$, and let $G_1,G_2\leq\Isom(\bH^d)$ be two geometrically finite nonplanar groups of the second kind whose limit sets are equal. Let $\Lambda$ denote the common limit set of $G_1$ and $G_2$, let $G_+ = \lb G_1,G_2\rb$, and let $G_- = G_1\cap G_2$. Since $\Lambda$ is a $G_+$-invariant closed subset of $\del \bH^d$ which contains at least two points, it follows from Proposition \ref{propositionminimality} that $\Lambda = \Lambda(G_+)$. In particular $\Lambda(G_+)\neq\del\bH^d$, which implies that $G_+$ is not dense in $\Isom(\bH^d)$. On the other hand $G_+$ is nonplanar since it contains a nonplanar subgroup. Thus by Theorem \ref{theoremgreenberg},  $G_+$ is discrete. Applying Theorem \ref{theoremsusskindswarup}, we see that both $G_1$ and $G_2$ are commensurable with $G_+$. Thus $G_1$ and $G_2$ are commensurable, and in particular
\[
[G_+:G_-] \leq [G_+:G_1]\cdot[G_+:G_2] < \infty,
\]
which completes the proof.
\end{proof}

\begin{remark}
\label{remarkhypothesesnecessary}
All three hypotheses of Theorem \ref{theoremequality} are necessary.
\begin{itemize}
\item[1.] The necessity of $G_1$ (and by symmetry $G_2$) being geometrically finite can be seen by letting $G_2$ be a Schottky group generated by two loxodromic isometries $g,h\in\Isom(\bH^2)$ and then letting
\[
G_1 := \lb g^{-n}h g^n:n\in\N\rb.
\]
Clearly $G_1$ and $G_2$ are not commensurable. On the other hand, $G_1$ is a normal subgroup of $G_2$ and so its limit set is preserved by $G_2$; thus by the minimality of limit sets we have $\Lambda(G_1) = \Lambda(G_2)$. Another example based on J\o rgensen fibrations is given at the end of \cite{SusskindSwarup}.
\item[2.] The necessity of $G_1$ (or equivalently, $G_2$) being nonplanar can be seen as follows: Let $G_1$ be a Schottky group generated by two loxodromic isometries $g,h\in\Isom(\bH^4)$ such that
\begin{itemize}
\item[(i)] the axes of $g$ and $h$ are coplanar,
\item[(ii)] the plane $P$ generated by their axes is preserved by $G_1$, and
\item[(iii)] $h$ commutes with every rotation of $\bH^4$ that fixes every point of $P$.
\end{itemize}
Let $j$ be an irrational rotation that fixes every point of $P$, and let
\[
G_2 = \lb g,hj\rb.
\]
Then for all $n\neq 0$, we have $j^n\notin G_2$ and $(hj)^n = h^n j^n\in G_2$ and thus $h^n\notin G_2$. It follows that $G_1$ and $G_2$ are not commensurable. On the other hand, $G_1\vert P = G_2\vert P$, which implies that $\Lambda(G_1) = \Lambda(G_2)$.
\item[3.] The necessity of $G_1$ (or equivalently, $G_2$) being of the second kind can be seen quite easily, as it suffices to consider any two lattices in $\Isom(\bH^d)$ which have no common element.
\end{itemize}
\end{remark}

\section{Infinite dimensions}
\label{sectioninfdim}

In this section we prove our main theorems, namely that while Theorem \ref{theoremsusskindswarup} can be generalized to infinite dimensions, Theorem \ref{theoremequality} cannot. We remark that our counterexample to an infinite-dimensional version of Theorem \ref{theoremequality} is also a counterexample to an infinite-dimensional version of Theorem \ref{theoremgreenberg}, since the proof of Theorem \ref{theoremequality} does not use finite-dimensionality in any way except for the use of Theorem \ref{theoremgreenberg}.

\begin{theorem}
\label{theoremsusskindswarupgeneralization}
Fix $2\leq d\leq\infty$, and let $G_1,G_2\leq\Isom(\bH^d)$ be strongly discrete groups whose limit sets are equal. If $G_1$ is nonelementary and geometrically finite and is a subgroup of $G_2$, then $G_1$ and $G_2$ are commensurable.
\end{theorem}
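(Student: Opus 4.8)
The plan is to reduce the statement to a counting problem on a single orbit, and then to solve that problem by separating the \emph{thick} and \emph{cuspidal} parts of the convex core, the latter being the only genuinely delicate point. Since $\Lambda(G_1)=\Lambda(G_2)=:\Lambda$, both groups share the convex hull $\CC:=\CC(G_1)=\CC(G_2)$ (note $G_2$ is nonelementary, as $\Lambda$ is uncountable), and since $G_2$ preserves $\Lambda$ it preserves $\CC$. As $G_1\leq G_2$, commensurability is exactly the assertion $[G_2:G_1]<\infty$. Fix a basepoint $o\in\CC$. Point stabilizers in $G_2$ are finite: $\mathrm{Stab}_{G_2}(o)=\{g\in G_2:\dist(o,g(o))=0\}$ is finite by strong discreteness. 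Hence the map $G_1\backslash G_2\to G_1\backslash(G_2\cdot o)$, $G_1 g\mapsto G_1\cdot g(o)$, is well defined and at most $\#\mathrm{Stab}_{G_2}(o)$-to-one, so it suffices to prove that $G_2\cdot o$ consists of finitely many $G_1$-orbits.

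Now I invoke geometric finiteness of $G_1$: there is a disjoint $G_1$-invariant family $\scrHa$ of horoballs and a radius $\sigma>0$ with $\CC\subset G_1(B(o,\sigma))\cup\bigcup_{H\in\scrHa}H$, and (as $G_1$ is of compact type) $\scrHa$ falls into finitely many $G_1$-orbits, each horoball centered at a bounded parabolic point. Since $G_2\cdot o\subset\CC$, every point of $G_2\cdot o$ lies either in $G_1(B(o,\sigma))$ or in some $H\in\scrHa$. For the \emph{thick part}: if $g(o)\in G_1(B(o,\sigma))$, then $h^{-1}g(o)\in B(o,\sigma)$ for some $h\in G_1$, and $B(o,\sigma)\cap(G_2\cdot o)$ is finite by strong discreteness of $G_2$; thus the thick points meet only finitely many $G_1$-orbits.

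The main obstacle is the \emph{cuspidal part}. Because horoballs in the same $G_1$-orbit are identified by elements of $G_1\leq G_2$ (which preserve $G_2\cdot o$), and because two points of a single horoball $H$ are $G_1$-equivalent only via $\Gamma:=\mathrm{Stab}_{G_1}(H)$ (the horoballs being disjoint), it is enough to fix one representative horoball $H$ per $G_1$-orbit, centered at a parabolic point $p$, and to show that $G_2\cdot o\cap H$ consists of finitely many $\Gamma$-orbits. I pass to the upper half-space model with $p=\infty$, so that $\Gamma$ acts on $H$ by height-preserving Euclidean isometries, and argue in two steps. First, since $p$ is a bounded parabolic point, $\Gamma$ acts cocompactly on $\Lambda\setminus\{p\}$, which by a standard property of cusps lets me move any point of $\CC\cap H$ by an element of $\Gamma$ into a region of bounded horizontal extent. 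Second---and this is the crux---I claim the heights of the points of $G_2\cdot o\cap H$ are bounded.

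To prove the claim, suppose $g_n(o)\in\CC\cap H$ had heights tending to $\infty$; after applying suitable $\gamma_n\in\Gamma$ we may assume their horizontal coordinates lie in a fixed bounded set. Choose any parabolic $\delta\in\Gamma$ (one exists because $p$ is a parabolic point, and $\delta$ fixes no point of $\bH^d$). Then $\dist(\gamma_n g_n(o),\,\delta\gamma_n g_n(o))$ is comparable to the bounded horizontal displacement divided by the height, hence tends to $0$. Thus the elements $(\gamma_n g_n)^{-1}\delta(\gamma_n g_n)\in G_2$ move $o$ by amounts tending to $0$. But strong discreteness forces a positive lower bound $\delta_0>0$ on the displacement of any element of $G_2$ not fixing $o$ (the finitely many elements with displacement $\leq 1$ have a positive minimal positive displacement). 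Hence for large $n$ the element $(\gamma_n g_n)^{-1}\delta(\gamma_n g_n)$ fixes $o$, i.e. $\delta$ fixes $\gamma_n g_n(o)\in\bH^d$, contradicting that $\delta$ is parabolic. This proves the heights are bounded, so every point of $G_2\cdot o\cap H$ can be moved by $\Gamma$ into a bounded subset of $\bH^d$; a final application of strong discreteness of $G_2$ shows $G_2\cdot o\cap H$ meets only finitely many $\Gamma$-orbits. Combining the thick and cuspidal estimates yields that $G_2\cdot o$ is a finite union of $G_1$-orbits. I expect the height-boundedness step to be the technical heart, and I note that the argument uses only strong discreteness and the soft metric geometry of cusps---no local compactness, Haar measure, or volume---so it applies verbatim when $d=\infty$.
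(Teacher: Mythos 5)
Your global strategy---reducing to the finiteness of the number of $G_1$-orbits on $G_2\cdot o$ and splitting the convex core into a thick part and cusps---is sound and organized differently from the paper's proof (which takes a Dirichlet-minimal transversal of $G_2/G_1$ and derives a contradiction from the radial/bounded-parabolic dichotomy for an accumulation point of the transversal orbit), but your treatment of the cuspidal part, which you correctly identify as the crux, has a genuine gap in two places. First, cocompactness of $\Gamma$ on $\Lambda\setminus\{p\}$ does \emph{not} let you move points of $\CC\cap H$ into a region of fixed bounded horizontal extent. A point of $\CC$ at height $t$ in the upper half-space model can only be brought, modulo $\Gamma$, within bounded \emph{hyperbolic} distance of a vertical ray over a fixed bounded ($\xi$-bounded) set; since a hyperbolic $\sigma$-ball at height $t$ has Euclidean size $\asymp t$, this bounds the horizontal coordinate by $O(t)$, not $O(1)$ (think of a point near the apex of the semicircular geodesic joining two limit points at Euclidean distance $\asymp t$; no element of $\Gamma$ need bring it over a fixed ball). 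This $O(t)$ bound is exactly what the paper establishes with its set $R^{(\sigma)}$. Second, with only $|z|=O(t)$ your displacement estimate fails: for $\delta(w)=Uw+v$ with nontrivial rotational part $U$ (a screw parabolic, which already exists in $\bH^4$ and cannot be ruled out of $\Gamma$), one has $\cosh\dist\bigl((z,t),\delta(z,t)\bigr)=1+|Uz+v-z|^2/(2t^2)$, and $|Uz-z|$ can itself be $\asymp t$, so the displacement stays bounded but does not tend to $0$. Your ``minimal positive displacement'' punchline is in essence a Margulis-lemma argument, and there is no Margulis lemma in $\bH^\infty$. A further infinite-dimensional hazard: the step ``choose any parabolic $\delta\in\Gamma$'' is not automatic, since in $\bH^\infty$ the stabilizer of a parabolic fixed point can consist entirely of elliptic elements.

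The height-boundedness claim itself is true (and since $\Gamma$ fixes $p$ it preserves heights, so the claim is $\Gamma$-invariant), but the efficient proof is the paper's mechanism rather than a displacement argument: if $g_n(o)\in G_2\cdot o\cap H$ had heights $t_n\to\infty$, apply $\gamma_n\in\Gamma\leq G_2$ so that $\gamma_n g_n(o)$ lies within hyperbolic distance $\sigma$ of a vertical ray over a fixed $\xi$-bounded set; then $\gamma_n g_n(o)$ converges to $p$ \emph{radially}, whereas $p$, being a parabolic point of the strongly discrete group $G_2$, is not a radial limit point of $G_2$ by \cite[Remark 12.3.8]{DSU}---a contradiction. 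This non-radiality statement is precisely the key input in the paper's parabolic case (there phrased via boundedness of the Gromov products $\lb \zero|\xi\rb_{x_n}$), and with it your thick/cusp counting goes through. One more loose end: your parenthetical assertion that $\scrHa$ falls into finitely many $G_1$-orbits requires justification in infinite dimensions, where closed balls are not compact; it can be extracted from compactness of $\Lambda$ together with disjointness of the horoballs, but note that the paper's transversal argument is structured so as never to need this fact.
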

Note that the finite-dimensional case of this theorem also provides another proof of Theorem \ref{theoremsusskindswarup}.
\begin{proof}[Proof of Theorem \ref{theoremsusskindswarupgeneralization}]
Let $\Lambda$ denote the common limit set of $G_1$ and $G_2$, and let $\CC$ denote the convex hull of $\Lambda$. Fix $\zero\in\CC$ and let $T\subset G_2$ be a transversal\footnote{I.e. a set for which each left coset $g G_1$ of $G_1$ intersects $T$ exactly once.} of $G_2/G_1$ with the following minimality property: for all $g\in T$ and for all $h\in G_1$,
\begin{equation}
\label{minimality}
\dist(\zero,g(\zero)) \leq \dist(\zero,h^{-1} g(\zero)) = \dist(h(\zero),g(\zero)).
\end{equation}
Here $\dist$ denotes the hyperbolic metric on $\bH^d$. Equivalently, (\ref{minimality}) says that $g(\zero)$ is in the closed Dirichlet domain $\DD$ centered at $\zero$ for the group $G_1$ (cf. \cite[Definition 12.1.4]{DSU}).

By contradiction we suppose that $[G_2:G_1] = \#(T) = \infty$. Since $G_1$ is geometrically finite, it is of compact type \cite[Theorem 12.4.4]{DSU}, and thus $G_2$ is also of compact type. On the other hand, $G_2$ is strongly discrete, so by \cite[Proposition 7.7.2]{DSU}, there exists a sequence $(g_n)_1^\infty$ in $T$ so that $g_n(\zero)\to \xi\in \Lambda$. But $G_1$ is geometrically finite, so by \cite[Theorem 12.4.4]{DSU} we have that $\xi$ is either a radial limit point or a bounded parabolic point of $G_1$.

If $\xi$ is a radial limit point of $G_1$, then $\xi$ is also a horospherical limit point of $G_1$, so there exists $h\in G_1$ such that $\busemann_\xi(\zero,h(\zero)) > 0$. But \eqref{minimality} gives
\[
\busemann_\xi(\zero,h(\zero)) = \lim_{n\to\infty} [\dist(\zero,g_n(\zero)) - \dist(h(\zero),g_n(\zero))] \leq 0,
\]
a contradiction.

If $\xi$ is a bounded parabolic point of $G_1$, then $\xi$ is a parabolic point of $G_2$, so by \cite[Remark 12.3.8]{DSU}, $\xi$ is not a radial limit point of $G_2$. We will show that the sequence $(g_n(\zero))_1^\infty$ tends radially to $\xi$, a contradiction.

Given distinct points $p,q\in\bH^d\cup\del\bH^d$, let $\geo pq$ denote the geodesic segment or ray connecting $p$ and $q$. Now, $\CC$ is cobounded in the \emph{quasiconvex core} $\CC_\zero = \bigcup_{g_1,g_2\in G_1} \geo{g_1(\zero)}{g_2(\zero)}$ \cite[Proposition 7.5.3]{DSU}, which is in turn cobounded in the set $A = \bigcup_{g\in G_1} \geo{g(\zero)}\xi$ by the thin triangles condition \cite[Proposition 4.3.1(ii)]{DSU}. Thus, there exists $\sigma > 0$ such that $\CC \subset A^{(\sigma)}$, where $A^{(\sigma)}$ denotes the $\sigma$-thickening of $A$. On the other hand, since $\xi$ is a bounded parabolic point of $G_1$, there exists a $\xi$-bounded set $S\subset\bH^d$ such that $G_1(\zero)\subset H_1(S)$, where $H_1$ is the stabilizer of $\xi$ in $G_1$. Thus, if we let
\[
R = \bigcup_{x\in S} \geo x\xi,
\]
then $\CC \subset \bigcup_{h\in H_1} h(R^{(\sigma)})$.
\begin{claim}
The function
\[
f(y) = \min(\lb \zero|\xi\rb_y, \lb y|\xi\rb_\zero)
\]
is bounded on $R^{(\sigma)}$.
\end{claim}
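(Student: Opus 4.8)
The plan is to reduce the claim to a one‑dimensional estimate along each of the geodesic rays comprising $R$, using two elementary identities relating the Gromov products to the Busemann function. First I would record that for every $y\in\bH^d$,
\[
\lb\zero|\xi\rb_y=\tfrac12\big[\dist(\zero,y)+\busemann_\xi(y,\zero)\big],\qquad \lb y|\xi\rb_\zero=\tfrac12\big[\dist(\zero,y)-\busemann_\xi(y,\zero)\big];
\]
these follow by adding and subtracting the defining limits and using $\dist(x,\zero)-\dist(x,y)\to\busemann_\xi(\zero,y)$ as $x\to\xi$. Consequently $f(y)=\tfrac12\big[\dist(\zero,y)-|\busemann_\xi(y,\zero)|\big]\ge 0$, since both Gromov products are nonnegative. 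Because $y\mapsto\lb\zero|\xi\rb_y$ and $y\mapsto\lb y|\xi\rb_\zero$ are each $1$‑Lipschitz, so is their minimum $f$; hence $\sup_{R^{(\sigma)}}f\le\sup_R f+\sigma$, and it suffices to bound $f$ on $R$ itself.

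Next I would fix $x\in S$ and analyze $f$ along the ray $\geo{x}{\xi}$, parametrized by arclength as $y_t$ with $y_0=x$. Along a ray toward $\xi$ the Busemann function decreases at unit speed, $\busemann_\xi(y_t,\zero)=\busemann_\xi(x,\zero)-t$, while $t\mapsto\dist(\zero,y_t)$ is $1$‑Lipschitz; the two identities then show, by the triangle inequality alone, that $t\mapsto\lb\zero|\xi\rb_{y_t}$ is nonincreasing and $t\mapsto\lb y_t|\xi\rb_\zero$ is nondecreasing. Since their difference $\busemann_\xi(y_t,\zero)$ strictly decreases and vanishes exactly once, $f$ attains its maximum along the ray at the point $p_x$ where $\geo{x}{\xi}$ (extended backward to a full geodesic with endpoint $\xi$ if necessary) crosses the horosphere $\Sigma_\zero=\{z:\busemann_\xi(z,\zero)=0\}$ through $\zero$, and there $f(p_x)=\lb\zero|\xi\rb_{p_x}=\tfrac12\dist(\zero,p_x)$. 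Monotonicity then gives, in every case, $\sup_{\geo{x}{\xi}}f\le\tfrac12\dist(\zero,p_x)$.

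Finally I would invoke the $\xi$‑boundedness of $S$. Since $p_x$ is the radial projection of $x$ to the fixed horosphere $\Sigma_\zero$, the defining property of a $\xi$‑bounded set is precisely that these projections lie in a bounded subset of $\Sigma_\zero$, i.e. $M:=\sup_{x\in S}\dist(\zero,p_x)<\infty$. Combining this with the previous paragraph yields $\sup_R f\le M/2$, and hence $\sup_{R^{(\sigma)}}f\le M/2+\sigma<\infty$, proving the claim. The one genuinely nontrivial point—and the reason the minimum appears in the definition of $f$—is that neither Gromov product is bounded on $R$ by itself: $\lb\zero|\xi\rb_y$ blows up on the portions of the rays near $\del\bH^d$, while $\lb y|\xi\rb_\zero$ blows up on the portions approaching $\xi$, and only their minimum, which peaks exactly at the horospherical crossing, stays controlled. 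The main obstacle is therefore packaging the $\xi$‑bounded hypothesis into the uniform bound $M<\infty$ and confirming the crossover behavior; the remaining ingredients are just the two identities and the Lipschitz reduction.
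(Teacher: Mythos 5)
Your proof is correct in substance, but it takes a genuinely different route from the paper's, and it has one step that is asserted rather than proved. The paper's argument is a short pointwise estimate: for $y\in\geo{x}{\xi}$ it takes the point $z\in\geo{x}{\xi}$ closest to $\zero$ (whose distance to $\zero$ is bounded uniformly in $x\in S$ by $\xi$-boundedness via \cite[Proposition 4.3.1(i)]{DSU}), observes that one of $\lb y|\xi\rb_z$, $\lb z|\xi\rb_y$ vanishes because $z$, $y$, $\xi$ are collinear, and concludes $f(y)\leq\dist(\zero,z)$ at once by moving the basepoint. You instead prove the exact Busemann identities $\lb\zero|\xi\rb_y=\tfrac12[\dist(\zero,y)+\busemann_\xi(y,\zero)]$ and $\lb y|\xi\rb_\zero=\tfrac12[\dist(\zero,y)-\busemann_\xi(y,\zero)]$, deduce opposite monotonicity of the two products along each ray, and locate the maximum of $f$ on each ray at the horospherical crossing point $p_x$, where $f(p_x)=\tfrac12\dist(\zero,p_x)$. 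Your version is longer but more structural: it explains \emph{why} the minimum is the right quantity (each product separately blows up on a different end of $R$, as you correctly note), and it gives the sharp location of the peak; the paper's version buys brevity at the cost of that insight. Both arguments share the same skeleton: reduce to single rays, use $\xi$-boundedness uniformly over $x\in S$, and pass from $R$ to $R^{(\sigma)}$ by Lipschitz continuity (your explicit $1$-Lipschitz bound refines the paper's appeal to uniform continuity).

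The weak point is your final paragraph: it is not true that ``the defining property of a $\xi$-bounded set is precisely'' that the crossing points $p_x$ lie in a bounded subset of the horosphere. In \cite{DSU}, $\xi$-boundedness of $S$ means that the closure of $S$ in $\bH^d\cup\del\bH^d$ misses $\xi$ (equivalently, $\sup_{x\in S}\lb x|\xi\rb_\zero<\infty$), and what it directly yields, via \cite[Proposition 4.3.1(i)]{DSU}, is the uniform bound $\dist(\zero,\geo{x}{\xi})\leq C$ used by the paper. You therefore owe a short bridge to your $M=\sup_{x\in S}\dist(\zero,p_x)<\infty$: let $z$ be the point of the complete geodesic through $\geo{x}{\xi}$ closest to $\zero$, so $\dist(\zero,z)\leq C$; since the Busemann function has unit speed along that geodesic, $\dist(z,p_x)=|\busemann_\xi(z,p_x)|=|\busemann_\xi(z,\zero)|\leq\dist(\zero,z)$, whence $\dist(\zero,p_x)\leq 2C$ --- and note this works even when $p_x$ lies on the backward extension of the ray (your ``extended backward if necessary'' case), because the distance from $\zero$ to the full line is at most the distance to the ray. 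With this two-line patch inserted, your argument is complete and correct.
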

\begin{subproof}
Fix $y\in R$, say $y\in \geo x\xi$ for some $x\in S$. Since $S$ is $\xi$-bounded, \cite[Proposiiton 4.3.1(i)]{DSU} implies that $\dist(\zero,\geo x\xi)$ is bounded independent of $x$. Let $z\in\geo x\xi$ be the point closest to $\zero$. Then either $\lb y|\xi\rb_z = 0$ or $\lb z|\xi\rb_y = 0$, depending on whether $z$ or $y$ is closer to $\xi$. It follows that $f(y) \leq \dist(\zero,z)$ is bounded independent of $y$. This shows that $f$ is bounded on $R$; since $f$ is uniformly continuous, it is also bounded on $R^{(\sigma)}$.
\end{subproof}
Fix $n\in\N$. Since $x_n := g_n(\zero)\in T\subset \CC\cap\DD$, there exists $h_n\in H_1$ such that $x_n\in h_n(R^{(\sigma)})$. Since $x_n\in\DD$, we have $\dist(\zero,x_n) \leq \dist(\zero,h_n^{-1}(x_n))$ and thus $f(x_n) \leq f(h_n^{-1}(x_n))$. Thus, the function $f$ is bounded on the sequence $(x_n)_1^\infty$. Since $x_n\to\xi$, we must have $\lb x_n | \xi\rb_\zero \to \infty$ (cf. \cite[Observation 3.4.20]{DSU}); thus the sequence $(\lb \zero | \xi \rb_{x_n})_1^\infty$ is bounded. As remarked earlier, this is equivalent to the fact that $x_n\to\xi$ radially, which is a contradiction as observed earlier.
\end{proof}

\begin{theorem}
\label{theoremnonrigidity}
There exist $G_1,G_2\leq\Isom(\bH^\infty)$ convex-cobounded nonplanar groups of the second kind whose limit sets are equal satisfying $G_1\cap G_2 = \{\id\}$. In particular, $G_1$ and $G_2$ are not commensurable.
\end{theorem}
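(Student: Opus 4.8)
The plan is to obtain $G_2$ as a symmetric copy of a single carefully chosen group $G_1$. Concretely, I would first build a convex-cobounded, nonplanar group $G_1\leq\Isom(\bH^\infty)$ of the second kind whose limit set $\Lambda:=\Lambda(G_1)$ carries an infinite-order isometric symmetry, i.e.\ an isometry $\iota\in\Isom(\bH^\infty)$ with $\iota(\Lambda)=\Lambda$, and then set $G_2=\iota G_1\iota^{-1}$. The advantage of conjugation is that almost everything transfers for free: since conjugation by an isometry is an isometry of the whole configuration, $G_2$ is automatically strongly discrete, convex-cobounded, nonplanar, and of the second kind, and moreover $\Lambda(G_2)=\iota(\Lambda(G_1))=\iota(\Lambda)=\Lambda=\Lambda(G_1)$, so the two limit sets are equal \emph{on the nose}. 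The entire difficulty is thus concentrated in producing an $\iota$ that preserves $\Lambda$ yet lies so far outside the commensurator of $G_1$ that $G_1\cap\iota G_1\iota^{-1}=\{\id\}$. By Theorem \ref{theoremequality} (equivalently Theorem \ref{theoremgreenberg}) no such $\iota$ can exist in finite dimensions, so the construction must exploit infinite-dimensionality in an essential way.

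A structural constraint dictates the shape of $\Lambda$. If $\iota$ commuted with $G_1$, then writing $\Lambda$ as the closure of the attracting fixed points $w(a^{+})$ of words $w$ in the generators, one sees $\iota$ would fix every such point and hence fix $\Lambda$ pointwise; as $G_1$ is nonplanar this forces $\iota=\id$. Thus $\iota$ must act on $\Lambda$ with genuine infinite-order dynamics, and the only mechanism producing indiscreteness of the ambient group (and hence the promised failure of an infinite-dimensional Greenberg theorem) is an \emph{irrational elliptic} part of $\iota$. But if $\iota$ has an irrational rotational part and $\lambda\in\Lambda$ is not fixed by $\iota$, then $\overline{\{\iota^{n}\lambda\}}$ is a positive-dimensional torus contained in $\Lambda$. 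Consequently $\Lambda$ cannot be totally disconnected, so $G_1$ cannot be a free Schottky group with a Cantor limit set. I would therefore take $G_1$ to be generated, through a ping-pong argument, by cocompact lattices $\Gamma_1,\Gamma_2,\dots$ of totally geodesic copies $P_1,P_2,\dots\cong\bH^{m}$ placed in general position, so that $\bigcup_i\overline{P_i}$ spans $\bH^\infty$ (forcing nonplanarity) while $\Lambda$ is the self-similar union of the round spheres $\del P_i$ together with their $G_1$-translates, a proper subset of $\del\bH^\infty$ (forcing second kind, with convex-coboundedness and absence of parabolics coming from cocompactness of the $\Gamma_i$).

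For the symmetry $\iota$ I would use an elliptic isometry whose transverse part is an irrational rotation of infinitely many mutually orthogonal $2$-planes, arranged to permute the dense family of translated spheres $\{w(\del P_i)\}$ among themselves so that $\iota(\Lambda)=\Lambda$ exactly; the positive-dimensional spheres in $\Lambda$ are precisely what \emph{accommodate} the irrational rotation rather than obstructing it. This is where infinite-dimensionality is indispensable: the irrational part makes $\langle G_1,\iota\rangle$, and hence $\langle G_1,G_2\rangle$, indiscrete, so that the finite-dimensional obstruction of Theorem \ref{theoremgreenberg} simply does not apply, and one recovers the counterexample to an infinite-dimensional Greenberg theorem noted in the text. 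Trivial intersection $G_1\cap G_2=\{\id\}$ I would then read off from the ping-pong dynamics: by choosing the rotation angles and the placement of the $P_i$ in sufficiently general position, no nontrivial reduced word of $G_1$ is conjugated by $\iota$ back into $G_1$.

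The step I expect to be the main obstacle is the \emph{exact} invariance $\iota(\Lambda)=\Lambda$ for an $\iota$ of infinite order. Forcing an irrational rotation to map the intricately nested family of translated spheres to itself precisely, rather than merely setwise up to bounded error, requires the sphere configuration and the rotation to be rigidly matched; the natural remedy is to make the seed configuration self-similar, so that a single symmetry of the seed propagates to the whole limit set. A secondary point requiring care is verifying that $G_1$ and $G_2$ remain \emph{individually} strongly discrete even though their join accumulates at the identity; I would control this by keeping the ping-pong translation lengths large enough to dominate the rotational contributions, so that no short words in $G_1$ (or its conjugate) return near $\id$.
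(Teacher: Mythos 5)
Your high-level strategy is in fact the same as the paper's: the paper's $G_2$ is precisely a conjugate $G_2=\pi_\lambda(\gamma)^{-1}G_1\pi_\lambda(\gamma)$ of $G_1$ by an isometry of $\bH^\infty$ preserving $\Lambda$, obtained by embedding the \emph{entire} isometry group of the Cayley tree $T$ of $\F_2$ into $\Isom(\bH^\infty)$ via the Burger--Iozzi--Monod representation (Theorem \ref{theoremBIM}) and conjugating by a suitable tree isometry $\gamma$. But your ``structural constraint'' step, which steers the whole construction, is a genuine error. You argue that the conjugating symmetry $\iota$ must have an irrational elliptic part, that its orbit closures in $\Lambda$ would then be positive-dimensional tori, and hence that $\Lambda$ cannot be totally disconnected and $G_1$ cannot be a free group with Cantor limit set. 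All three conclusions are contradicted by the actual construction: in the paper $\Lambda=\Psi_\lambda(\del T)$ is a Cantor set, $G_1=\pi_\lambda(\Phi(\F_2))$ is free and convex-cobounded, and the conjugator comes from an explicit \emph{involution} $\gamma$ of the tree (it flips the exponents of words beginning with a power of $b$ and fixes the rest). The indiscreteness of $\langle G_1,G_2\rangle$ --- the failure of an infinite-dimensional Theorem \ref{theoremgreenberg} --- is produced not by irrational rotations but by the infinite vertex stabilizers of $\Isom(T)$: infinitely many elliptic isometries of $\bH^\infty$ fix $\Psi_\lambda(e)$ and preserve the Cantor set $\Lambda$, which is impossible in finite dimensions for a different reason than the one you isolate. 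Your torus argument is itself a finite-dimensional compactness argument: in $\bH^\infty$ the closure of an orbit of an elliptic isometry with infinitely many independent rotation angles need not be compact, let alone a torus, so the claimed dichotomy fails on its own terms.

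Because of this misdiagnosis, your replacement construction carries all the weight, and it is not carried out at exactly the decisive points --- as you yourself concede. Exact invariance $\iota(\Lambda)=\Lambda$ for the nested family of spheres is only ``expected'' to follow from self-similarity of a seed configuration; $G_1\cap G_2=\{\id\}$ is attributed to ``general position'' with no argument; and with infinitely many totally geodesic factors $P_1,P_2,\dots$ spanning $\bH^\infty$, even the compactness of $\Lambda$ (which convex-coboundedness forces, since convex-cobounded groups are of compact type) and the strong discreteness of the infinite ping-pong product are nontrivial and unaddressed. Contrast this with how cheaply the paper gets these properties: any $\Gamma\leq\Isom(T)$ acting sharply transitively on vertices yields, via the distance formula $\lambda^{\dist(x,y)}=\cosh\dist(\Psi_\lambda(x),\Psi_\lambda(y))$ and property (iii) of Theorem \ref{theoremBIM}, a strongly discrete convex-cobounded group with limit set exactly $\Lambda$ (Lemma \ref{lemmasharply}), and the trivial intersection $\Gamma_1\cap\gamma^{-1}\Gamma_1\gamma=\{\id\}$ reduces to a two-line combinatorial check (evaluate the relation $\gamma\Phi_{x_1}=\Phi_{x_2}\gamma$ at the identity vertex and test it against $y=b$ or $y=a$). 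So while your opening move --- conjugate $G_1$ by a symmetry of its limit set, which Theorem \ref{theoremequality} forbids in finite dimensions --- is the right one, the proposal as written does not constitute a proof, and the heuristic that ruled out Cantor limit sets would have prevented you from finding the construction that actually works.
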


In the proof of Theorem \ref{theoremnonrigidity}, we will make use of the following:

\begin{theorem}[{\cite[Theorem 1.1]{BIM}}]
\label{theoremBIM}
Let $T$ be a tree and let $V\subset T$ denote its set of vertices, and suppose that $\#(V) = \#(\N)$. Then for every $\lambda > 1$, there is an embedding $\Psi_\lambda:V\to\bH^\infty$ and a representation $\pi_\lambda:\Isom(T)\to\Isom(\bH^\infty)$ such that
\begin{itemize}
\item[(i)] $\Psi_\lambda$ is $\pi_\lambda$-equivariant and extends equivariantly to a map $\Psi_\lambda:\del T\to\del\bH^\infty$,
\item[(ii)] for all $x,y\in V$,
\[
\lambda^{\dist(x,y)} = \cosh\dist(\Psi_\lambda(x),\Psi_\lambda(y)), \regtext{ and}
\]
\item[(iii)] the set $\Psi_\lambda(V)$ is cobounded in the convex hull of the set $\Lambda := \Psi_\lambda(\del T)$.
\end{itemize}
\end{theorem}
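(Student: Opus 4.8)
The plan is to work in the hyperboloid model of $\bH^\infty$ sitting inside a Lorentzian Hilbert space $\mathcal H\oplus\mathbb{R}$ with form $\langle(v,s),(w,t)\rangle = \langle v,w\rangle_{\mathcal H}-st$, so that $\bH^\infty = \{(v,s):\langle v,v\rangle_{\mathcal H}-s^2=-1,\ s>0\}$ and $\cosh\dist(p,q) = -\langle p,q\rangle$. Fixing a base vertex $o\in V$ and writing $\Psi(x) = (\phi(x),c(x))$, condition (ii) forces (after normalizing $\phi(o)=0$, whence $c(o)=1$) the choice $c(x) = \lambda^{\dist(o,x)}$, and reduces the entire statement (ii) to producing a Hilbert-space-valued map $\phi\colon V\to\mathcal H$ whose Gram kernel is
\[
G(x,y) := \lambda^{\dist(o,x)+\dist(o,y)} - \lambda^{\dist(x,y)}.
\]
Using the tree identity $\dist(x,y) = \dist(o,x)+\dist(o,y)-2(x|y)_o$, where $(x|y)_o := \tfrac12(\dist(o,x)+\dist(o,y)-\dist(x,y))$ is the (integer) Gromov product, this factors as $G(x,y) = a(x)a(y)\bigl(1-\mu^{(x|y)_o}\bigr)$ with $a(x) = \lambda^{\dist(o,x)}$ and $\mu := \lambda^{-2}\in(0,1)$. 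Thus everything comes down to showing $G$ is positive semidefinite and then invoking the Kolmogorov/GNS realization of a PSD kernel.

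The crux, and the step I expect to be the main obstacle, is to prove that $K(x,y) := 1-\mu^{(x|y)_o}$ is positive semidefinite for $\mu\in(0,1)$. I would use the telescoping identity $1-\mu^n = (1-\mu)\sum_{k=0}^{n-1}\mu^k$, which gives
\[
K(x,y) = (1-\mu)\sum_{m\geq 1}\mu^{m-1}\,\mathbf 1\!\left[(x|y)_o\geq m\right].
\]
For fixed $m$, the kernel $\mathbf 1[(x|y)_o\geq m]$ is the indicator of the equivalence relation ``$x$ and $y$ pass through the same depth-$m$ vertex'' (transitivity uses that in a tree two geodesics from $o$ share exactly an initial segment); such an equivalence-relation kernel is block-diagonal with all-ones blocks and is therefore PSD. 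Hence $K$ is a nonnegative combination of PSD kernels and is PSD, and $G = (a\otimes a)\odot K$ is PSD by the Schur product theorem. The Kolmogorov construction then yields $\phi$ with Gram $G$; a direct check gives $\langle\phi(x),\phi(x)\rangle_{\mathcal H} = c(x)^2-1$, so $\Psi(x)=(\phi(x),c(x))$ lands on the hyperboloid, and $\cosh\dist(\Psi(x),\Psi(y)) = c(x)c(y)-\langle\phi(x),\phi(y)\rangle = \lambda^{\dist(x,y)}$, which is (ii). Injectivity of $\Psi$ is immediate, since the right-hand side equals $1$ only when $x=y$; countability of $V$ makes $\mathcal H$ separable, so the image lies in a copy of $\bH^\infty$.

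For the equivariance in (i), the key point is that the Lorentzian Gram kernel $-\langle\Psi(x),\Psi(y)\rangle = \lambda^{\dist(x,y)}$ is $\Isom(T)$-invariant. I would therefore carry out the realization canonically, as the reproducing-kernel space of this invariant kernel of one negative square: for $g\in\Isom(T)$ the assignment $\Psi(x)\mapsto\Psi(gx)$ preserves the indefinite form on the linear span of $\{\Psi(x)\}$, hence extends to a Lorentzian isometry $\pi_\lambda(g)$ of its closure, and $g\mapsto\pi_\lambda(g)$ is a homomorphism by minimality of the realization. Restricting to the hyperboloid (and isometrically including into the standard $\bH^\infty$) gives $\pi_\lambda\colon\Isom(T)\to\Isom(\bH^\infty)$ with $\Psi$ equivariant. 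Since (ii) shows $\Psi$ is a quasi-isometric embedding of the tree into the Gromov-hyperbolic space $\bH^\infty$, it extends continuously and $\pi_\lambda$-equivariantly to a boundary map $\del T\to\del\bH^\infty$ by the Morse lemma, defining $\Lambda$.

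Finally, for coboundedness (iii), I would extend $\Psi$ over the edges of $T$ by hyperbolic geodesic segments; by the Morse lemma each image segment stays within uniformly bounded distance of the genuine $\bH^\infty$-geodesic joining its endpoints, so for $\xi,\eta\in\del T$ the geodesic $\geo{\Psi(\xi)}{\Psi(\eta)}$ is uniformly tracked by $\Psi([\xi,\eta])$. Taking the union over boundary pairs shows $\CC(\Lambda)$ lies within bounded Hausdorff distance of $\Psi(T)$; and since every point of $T$ lies within distance one of a vertex, every point of $\Psi(T)$ is within $\operatorname{arccosh}(\lambda)$ of $\Psi(V)$, so $\Psi(V)$ is cobounded in $\CC(\Lambda)$, as required.
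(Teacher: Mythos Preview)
The paper does not prove this theorem; it is quoted from \cite{BIM} (as the attribution in the theorem header indicates) and invoked as a black box in the proof of Theorem~\ref{theoremnonrigidity}. There is therefore no argument in the paper to compare your proposal against.

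For what it is worth, your outline is a faithful reconstruction of the standard proof. The reduction in the hyperboloid model to positive-semidefiniteness of $G(x,y)=\lambda^{\dist(o,x)+\dist(o,y)}-\lambda^{\dist(x,y)}$, followed by the telescoping decomposition of $1-\mu^{(x|y)_o}$ into the kernels $\mathbf 1[(x|y)_o\ge m]$, is precisely the mechanism behind the embedding in \cite{BIM}. One small quibble: $\mathbf 1[(x|y)_o\ge m]$ is not literally the indicator of an equivalence relation on all of $V$, since it vanishes on the diagonal at vertices with $\dist(o,x)<m$; but it \emph{is} the Gram kernel of the map sending $x$ to the basis vector indexed by its depth-$m$ ancestor when one exists and to $0$ otherwise, so positive-semidefiniteness holds regardless. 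The equivariance, boundary-extension, and coboundedness steps are then correct applications of, respectively, uniqueness of the minimal realization of an invariant kernel with one negative square, the Morse lemma for quasi-isometric embeddings into a hyperbolic space, and quasi-geodesic stability.
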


\begin{proof}[Proof of Theorem \ref{theoremnonrigidity}]
Let $\F_2$ be the free group on two elements, and let $T$ be the right Cayley graph of $\F_2$. Fix any $\lambda > 1$, and apply the previous theorem to get $\Psi_\lambda$, $\pi_\lambda$, and $\Lambda$. Without loss of generality, we can suppose that there is no closed totally geodesic subspace of $\bH^\infty$ containing $\Lambda$; otherwise, replace $\bH^\infty$ by the smallest such subspace.

\begin{lemma}
\label{lemmasharply}
If $\Gamma\leq\Isom(T)$ acts sharply transitively on $V$, then $G := \pi_\lambda(\Gamma)$ is strongly discrete and convex-cobounded; moreover, $\Lambda(G) = \Lambda$.
\end{lemma}
\begin{subproof}
The equation $\Lambda(G) = \Lambda$ follows from the $\pi_\lambda$-equivariance of $\Psi_\lambda$ together with the fact that $\Lambda(\Gamma) = \del T$. Strong discreteness follows from (ii) of Theorem \ref{theoremBIM}, and convex-coboundedness follows from (iii).
\end{subproof}

Let $\Phi:\F_2\to \Isom(T)$ be the natural left action of $\F_2$ on its right Cayley graph, and let $\Gamma_1 = \Phi(\F_2) \leq\Isom(T)$.

\begin{lemma}
There exists $\gamma\in\Isom(T)$ such that $\Gamma_1\cap \gamma^{-1} \Gamma_1 \gamma = \{\id\}$.
\end{lemma}
\begin{subproof}
Write $\F_2 = \lb a,b\rb$, and define $\gamma:\F_2\to\F_2$ by the formula
\[
\gamma(a^{n_1}b^{n_2}\cdots a^{n_{k - 1}}b^{n_k}) =
\begin{cases}
a^{n_1}b^{n_2}\cdots a^{n_{k - 1}}b^{n_k} & \text{ if $n_1\neq 0$}\\
b^{-n_2}\cdots a^{-n_{k - 1}}b^{-n_k} & \text{ if $n_1 = 0$}
\end{cases}.
\]
(The convention here is that $n_i\neq 0$ for $i = 2,\ldots,k - 1$.) It can be verified directly that $\gamma$ preserves edges in the Cayley graph, so $\gamma$ extends uniquely to $\gamma\in\Isom(T)$. By contradiction, suppose there exist $x_1,x_2\in \F_2\butnot\{e\}$ with $\Phi_{x_1} = \gamma^{-1} \Phi_{x_2} \gamma$. Then $\gamma\Phi_{x_1} = \Phi_{x_2}\gamma$; evaluating at $e$ gives $x_2 = \gamma(x_1)$. Write $x = x_1$; we have
\begin{equation}
\label{gxy}
\gamma(xy) = \gamma(x)\gamma(y) \all y\in \F_2.
\end{equation}
Write $x = a^{n_1}b^{n_2}\cdots a^{n_{k - 1}}b^{n_k}$. If $n_1 \neq 0$, then
\[
\gamma(xb) = \gamma(x) b \neq \gamma(x) b^{-1} = \gamma(x) \gamma(b),
\]
and if $n_1 = 0$, then
\[
\gamma(xa) = \gamma(x) a^{-1} \neq \gamma(x) a = \gamma(x) \gamma(a).
\]
Either equation contradicts \eqref{gxy}.
\end{subproof}
Let $\Gamma_2 = \gamma^{-1}\Gamma_1 \gamma$. By Lemma \ref{lemmasharply}, $G_1 = \pi_\lambda(\Gamma_1)$ and $G_2 = \pi_\lambda(\Gamma_2)$ are strongly discrete and convex-cobounded, and $\Lambda(G_1) = \Lambda = \Lambda(G_2)$. On the other hand, $G_1\cap G_2 = \{\id\}$.
\end{proof}

\bibliographystyle{amsplain}

\bibliography{bibliography}

\providecommand{\bysame}{\leavevmode\hbox to3em{\hrulefill}\thinspace}
\providecommand{\MR}{\relax\ifhmode\unskip\space\fi MR }
\providecommand{\MRhref}[2]{%
  \href{http://www.ams.org/mathscinet-getitem?mr=#1}{#2}
}
\providecommand{\href}[2]{#2}
\begin{thebibliography}{1}

\bibitem{Bowditch_geometrical_finiteness}
B.~H. Bowditch, \emph{Geometrical finiteness for hyperbolic groups}, J. Funct.
  Anal. \textbf{113} (1993), no. 2, 245--317.

\bibitem{BIM}
M.~Burger, A.~Iozzi, and N.~Monod, \emph{Equivariant embeddings of trees into
  hyperbolic spaces}, Int. Math. Res. Not. (2005), no. 22, 1331--1369.

\bibitem{DSU}
T.~Das, D.~S. Simmons, and M.~Urba\'nski, \emph{Geometry and dynamics in
  {G}romov hyperbolic metric spaces {I}: with an emphasis on non-proper
  settings}, \url{http://arxiv.org/abs/1409.2155}, preprint 2014.

\bibitem{Greenberg2}
L.~Greenberg, \emph{Discrete groups of motions}, Canad. J. Math. \textbf{12}
  (1960), 415--426.

\bibitem{Greenberg}
\bysame, \emph{Discrete subgroups of the {L}orentz group}, Math. Scand.
  \textbf{10} (1962), 85--107.

\bibitem{SusskindSwarup}
P.~Susskind and G.~A. Swarup, \emph{Limit sets of geometrically finite
  hyperbolic groups}, Amer. J. Math. \textbf{114} (1992), no. 2, 233--250.

\bibitem{YangJiang}
W.-Y. Yang and Y.-P. Jiang, \emph{Limit sets and commensurability of {K}leinian
  groups}, Bull. Aust. Math. Soc. \textbf{82} (2010), no. 1, 1--9.

\end{thebibliography}

\end{document}